\def\eps{\varepsilon}
\def\bs{\boldsymbol}
\def\hat{\widehat}
\def\Ex{\mathop{\mathrm{I\!E}}\nolimits}
\def\Pr{\mathop{\mathrm{I\!P}}\nolimits}
\newcommand{\R}{\mathbb{R}}
\newcommand{\beps}{\bs{\eps}}
\newcommand{\X}{\bs{X}}
\newcommand{\Y}{\bs{Y}}
\newcommand{\FF}{\mathcal{F}}
\newcommand{\XX}{\mathcal{X}}
\def\bea{\begin{eqnarray*}}
\def\eea{\end{eqnarray*}}
\newtheorem{Theorem}{Theorem}
\newtheorem{Lemma}{Lemma}
\begin{document}

\addtolength{\baselineskip}{0.3\baselineskip}

\begin{center}
	{\large University of Bern}\\
	{\large Institute of Mathematical Statistics and Actuarial Science}
	
	\bigskip
	\textbf{\large Technical Report 78}
\end{center}

\begin{center}
\textbf{\Large (Ab)Using Regression for Data Adjustment}

{\large Lutz D\"umbgen}\\
{\large September 2016}
\end{center}

\begin{abstract}
In various economic applications, people want to compare $n$ units with respect to certain quantities $Y_1, Y_2, \ldots, Y_n$ measuring their performance. The latter, however, is often influenced by certain factors which are beyond control of the units, and one would like to extract an adjusted performance from the data. Specifically, let $X_i \in \XX$ summarize the factors of the $i$-th unit. Then one could think of a model equation $Y_i = f_o(X_i) + \eps_i$ with a regression function $f_o : \XX \to \R$ describing the unavoidable influence of the factors $X_i$ and $\eps_i$ being the adjusted performance of the $i$-th unit. Now a common proposal is to estimate $f_o$ via regression methods by a function $\hat{f}$ depending on the current data $(X_i,Y_i)$, possibly augmented by additional past data, and to use the residuals $\hat{\eps}_i := Y_i - \hat{f}(X_i)$ as surrogates for the adjusted performances $\eps_i$. In the present report we discuss this approach, its potential pitfalls and (mis)interpretation. In particular, an unavoidable property of the residuals $\hat{\eps}_i$ is that they measure only parts of the adjusted performance while the remaining parts get hidden in the estimated function $\hat{f}$. Possible alternatives are mentioned briefly.
\end{abstract}

\section{Introduction}

This report is motivated by various consulting cases all of which involved a variant of the following method: Suppose that one wants to compare $n$ units with respect to certain quantities $Y_1, Y_2, \ldots, Y_n$ measuring their performance. Two specific examples are:

\begin{itemize}
\item The units are hospitals, and for a certain type of diseases, $Y_i$ measures the mean success of treatment within the $i$-th hospital.
\item The units are service areas of a big postal service, and $Y_i$ measures the mean delivery time for an item in service area $i$. (Here one aims for low values of $Y_i$.)
\end{itemize}

As usual in regression, we refer to the $Y_i$ as `responses', although in the present context `raw performance measures' would be appropriate, too. Typically the performance is influenced by certain factors which cannot be controlled by the units, and one would like a fair comparison, taking such differences into account. In the example of hospitals, some hospitals may tend to get more problematic cases than others or the mix of clients may vary from hospital to hospital with respect to important factors such as age, social background or initial diagnosis (in case of considering various diseases simultaneously). In the example of postal service areas, the population density is certainly negatively correlated with the delivery times. In general let $X_i \in \XX$ be a tuple containing the potentially relevant factors of the $i$-th unit. Then one could think of a model equation
\begin{equation}
\label{eq:additive.model}
	Y_i \ = \ f_o(X_i) + \eps_i
	\quad\text{for} \ 1 \le i \le n
\end{equation}
with a regression function $f_o : \XX \to \R$ describing the unavoidable influence of the tuple $X_i$, while $\eps_i$ is the adjusted performance (`true performance') of the $i$-th unit. One could also think of a multiplicative model with $Y_i = f_o(X_i) \cdot \eps_i$ with positive quantities $f_o(X_i)$ and $\eps_i$, but then a log-transformation would lead us to the additive equation \eqref{eq:additive.model} with $\log Y_i$, $\log f_o(X_i)$ and $\log \eps_i$ in place of $Y_i$, $f_o(X_i)$ and $\eps_i$, respectively.

If we were able to estimate the regression function $f_o$ with sufficient accuracy by a function $\hat{f}$ from the actual data $(X_1,Y_1), \ldots, (X_n,Y_n)$, possibly augmented by additional past data, one could use the residuals
\[
	\hat{\eps}_i \ = \ Y_i - \hat{f}(X_i)
\]
as surrogates for the adjusted performances $\eps_i$. A standard way to estimate $f_o$ would be via some regression method, for instance, least squares estimation of $f_o$ under the assumption that it belongs to a given family $\FF$ of regression functions $f : \XX \to \R$. Thus we estimate $f_o$ by a function $\hat{f} \in \FF$ such that
\[
	\sum_{i=1}^n (Y_i - \hat{f}(X_i))^2
\]
becomes minimal. In this report we focus on this approach with linear models $\FF$. That means, $\FF$ is assumed to be a finite-dimensional linear space of functions on $\XX$. The simplest case is multiple linear regression: Suppose that $x \in \XX$ stands for a tuple of $K$ numerical or $\{0,1\}$-valued covariables $x^{(1)}, \ldots, x^{(K)}$. Then one could consider functions
\[
	f(x) \ = \ a + \sum_{k=1}^K b_k^{} x^{(k)}
\]
with real constants $a, b_1, \ldots, b_K$. The set $\FF$ of all such functions $f$ is a linear space of dimension $K+1$. Alternatively one could consider multiple quadratic regression involving functions of the form
\[
	f(x) \ = \ a + \sum_{k=1}^K b_k^{} x^{(k)}
		+ \sum_{k=1}^K \sum_{\ell=k}^K g_{k,\ell}^{} x^{(k)} x^{(\ell)}
\]
with real constants $a$, $b_k$ and $g_{k,\ell}$, $1 \le k \le \ell \le K$. This corresponds to a function space $\FF$ of dimension $(K+1)(K+2)/2$. For a general introduction to regression methods we refer to Ryan~(1997) or D\"umbgen~(2015).

In Section~\ref{sec:Non-Identifiability} we discuss (non-)identifiability of $f_o$ in \eqref{eq:additive.model} and its implications. In particular we contrast the present (ab)use of regression with common applications and mention briefly an alternative approach called data envelopement analysis.

In Section~\ref{sec:Suppose} we discuss the potential impact of replacing the $\eps_i$ with the residuals $\hat{\eps}_i$ under the rather optimistic assumption that the $\eps_i$ may be viewed as realisations of independent Gaussian random variables with mean $0$ and a common standard deviation $\sigma > 0$. It is shown that ranking of units via $\hat{\eps}_i$ is strongly influenced by the observations' leverages.

Finally, in Section~\ref{sec:Two-Way} we describe a potential alternative method which is feasible whenever for each unit $i$ we have several observations (`cases')
\[
	(X_{ij},Y_{ij}) , \quad 1 \le j \le J_i .
\]
For instance, in the example of hospitals, we may have data of $J_i$ patients or treatments in hospital $i$, and the tuples $X_{ij}$ may be case-specific rather than hospital-specific. Then a possible alternative to model equation \eqref{eq:additive.model} is given by
\begin{equation}
\label{eq:additive.model.2}
	Y_{ij} \ = \ f_o(X_{ij}) + a_i + \epsilon_{ij}
\end{equation}
with a regression function $f_o : \XX \to \R$ as before, unit-specific parameters $a_i$ measuring their performances and random errors $\epsilon_{ij}$.

Some technical arguments and proofs are deferred to Section~\ref{sec:Proofs}.

\section{Non-identifiability and its implications}
\label{sec:Non-Identifiability}

For the moment let us view $\beps = (\eps_i)_{i=1}^n$ as a fixed $n$-dimensional vector. Without further assumptions on $\beps$, the function $f_o$ is not well-defined through \eqref{eq:additive.model}. We could replace $f_o$ and $\beps$ with $f_o + \Delta$ and $(\eps_i - \Delta(X_i))_{i=1}^n$, respectively, where $\Delta$ is an arbitrary function in our model space $\FF$, and \eqref{eq:additive.model} would remain true.

For $x \in \XX$ the value $f_o(x)$ could be interpreted as the average response over all units, if all of them were forced to work under conditions as specified by $x$. Thus a natural additional requirement seems to be that
\begin{equation}
\label{eq:centered.eps}
	\sum_{i=1}^n \eps_i \ = \ 0 .	
\end{equation}
But \eqref{eq:centered.eps} alone does not alleviate our identifiability problem. Suppose that $\FF$ contains all constant functions. For any non-constant function $\Delta \in \FF$ we could replace $f_o$ and $\beps$ with $f_o + \Delta - c$ and $(\eps_i - \Delta(X_i) + c)_{i=1}^n$, respectively, where $c := n^{-1} \sum_{i=1}^n \Delta(X_i)$. Then both \eqref{eq:additive.model} and \eqref{eq:centered.eps} would remain true.

Recall that the least squares estimator of $f_o$ is a function $\hat{f} \in \FF$ minimising the sum of squares $\sum_{i=1}^n (Y_i - \hat{f}(X_i))^2$. Geometrically speaking, the vector $\hat{f}(\X) = \bigl( \hat{f}(X_i) \bigr)_{i=1}^n$ is the orthogonal projection of $\Y = (Y_i)_{i=1}^n$ onto the linear subspace
\[
	\FF(\X) \ := \ \bigl\{ f(\X) : f \in \FF \bigr\}
\]
of $\R^n$, where $f(\X) := (f(X_i))_{i=1}^n \in \R^n$. In particular, $\hat{\beps} = (\hat{\eps}_i)_{i=1}^n$ is the orthogonal projection of $\bs{Y}$ onto the space $\FF(\X)^\perp$ of all vectors $\bs{v} \in \R^n$ which are perpendicular to $\FF(\X)$. If $f_o \in \FF$, then $\hat{\beps} = (\hat{\eps}_i)_{i=1}^n$ is the orthogonal projection of $\beps = (\eps_i)_{i=1}^n$ onto the space $\FF(\X)^\perp$. Then $\hat{f}(\X) = f_o(\X)$ and $\hat{\beps} = \beps$ if, and only if,
\begin{equation}
\label{eq:eps.perp.FF}
	\beps \ \perp \ \FF(\X) .
\end{equation}

\paragraph{Violations of \eqref{eq:eps.perp.FF}.}
This assumption is often hard to justify. One can easily imagine situations in which it is violated: In the setting of hospitals, suppose that each tuple $X_i$ includes a quantity $X_i^{(1)}$ measuring average severity of cases treated in hospital $i$. Hospitals which tend to treat the more difficult cases could also hire particularly experienced or highly qualified personnel, despite the higher costs. Presumably this would result in a positive sample correlation for the pairs $(X_i^{(1)}, \eps_i)$, $1 \le i \le n$, a clear violation of \eqref{eq:eps.perp.FF} in the setting of multiple linear or quadratic regression. In the setting of postal service areas, suppose that $X_i$ includes a measure $X_i^{(1)}$ of population density. It may happen that service areas in low density regions take extra efforts to accelerate deliveries, trying to alleviate this unfavourable condition. This would result in a positive sample correlation for the pairs $(X_i^{(1)}, \eps_i)$ if $Y_i$ stands for average delivery time, again a violation of \eqref{eq:eps.perp.FF}. But in both settings the positive effect of the extra efforts would be eliminated by the regression method in that it appears in $\hat{f}(\X)$ rather than in $\hat{\beps}$.

Alternatively, suppose that the first component $x^{(1)}$ of a tuple $x \in \XX$ is a categorical variable with values in $\{1,\ldots,L\}$. Thus the $n$ units may be divided into $L$ different categories. If this factor $x^{(1)}$ is considered to have a potential impact on the units' performance, one should require $\FF$ to contain at least all functions
\[
	f(x) \ = \ \sum_{\ell=1}^L a_\ell^{} 1_{[x^{(1)} = \ell]}
\]
with real constants $a_1,\ldots,a_L$. But then the residuals will automatically satisfy the equations
\[
	\sum_{i \,:\, X_i^{(1)} = \ell} \hat{\eps}_i \ = \ 0
	\quad\text{for} \ 1 \le \ell \le L .
\]
It may happen that the units in a particular category $\ell_o$ perform above or below average, meaning that $\sum_{i \,:\, X_i^{(1)} = \ell_o} \eps_i$ is strictly positive or negative, on top of potential differences between the sums $\sum_{i \,:\, X_i^{(1)} = \ell} f_o(X_i)$, $1 \le \ell \le L$, which one intends to adjust for. But the regression residuals won't reflect such patterns.

\paragraph{A simple example.}
To illustrate these potential problems, we simulated $n = 25$ observation pairs $(X_i,Y_i)$ with univariate $X_i$ such that $f_o(x) = 1 - x/2$. The sample correlation between $\bs{X} = (X_i)_{i=1}^n$ and $\Y$ equals $-0.981$, but $\beps$ and $\bs{X} = (X_i)_{i=1}^n$ have sample correlation $0.831$. The left panel of Figure~\ref{fig:Demo1ab} shows the data pairs $(X_i,Y_i)$, the function $f_o$ (thick line) as well as the fitted function $\hat{f}(x) = \hat{a} + \hat{b} x$ (dashed line). Here $\FF$ was taken to be the set of all affine functions $f(x) = a + bx$. Due to the negative sample correlation between $\X$ and $\beps$, the estimated function $\hat{f}$ with slope $\hat{b} = -0.385$ is less steep then the true regression function $f_o$ with slope $-0.5$.

In the right panel we see the adjusted performances $\eps_i$ and their estimators, i.e.\ the residuals $\hat{\eps}_i$. For $20$ observations, the signs of $\eps_i$ and $\hat{\eps}_i$ were identical, for the other $5$ observations they differed! In Figure~\ref{fig:Demo1c} one sees the pairs $(R_i,\hat{R}_i)$, where $R_i$ is the rank of $\eps_i$ within $\beps$ while $\hat{R}_i$ is either the rank of $\hat{\eps}_i$ within $\hat{\beps}$ (bullets) or of $Y_i$ within $\bs{Y}$ (circles). Obviously, the differences $\hat{R}_i - R_i$ may be substantial. Replacing the raw values $Y_i$ with the residuals $\hat{\eps}_i$ results in an improvement for $18$ and no change for $3$ observations; but for $4$ observations the estimated rank gets worse.

\begin{figure}
\includegraphics[width=0.49\textwidth]{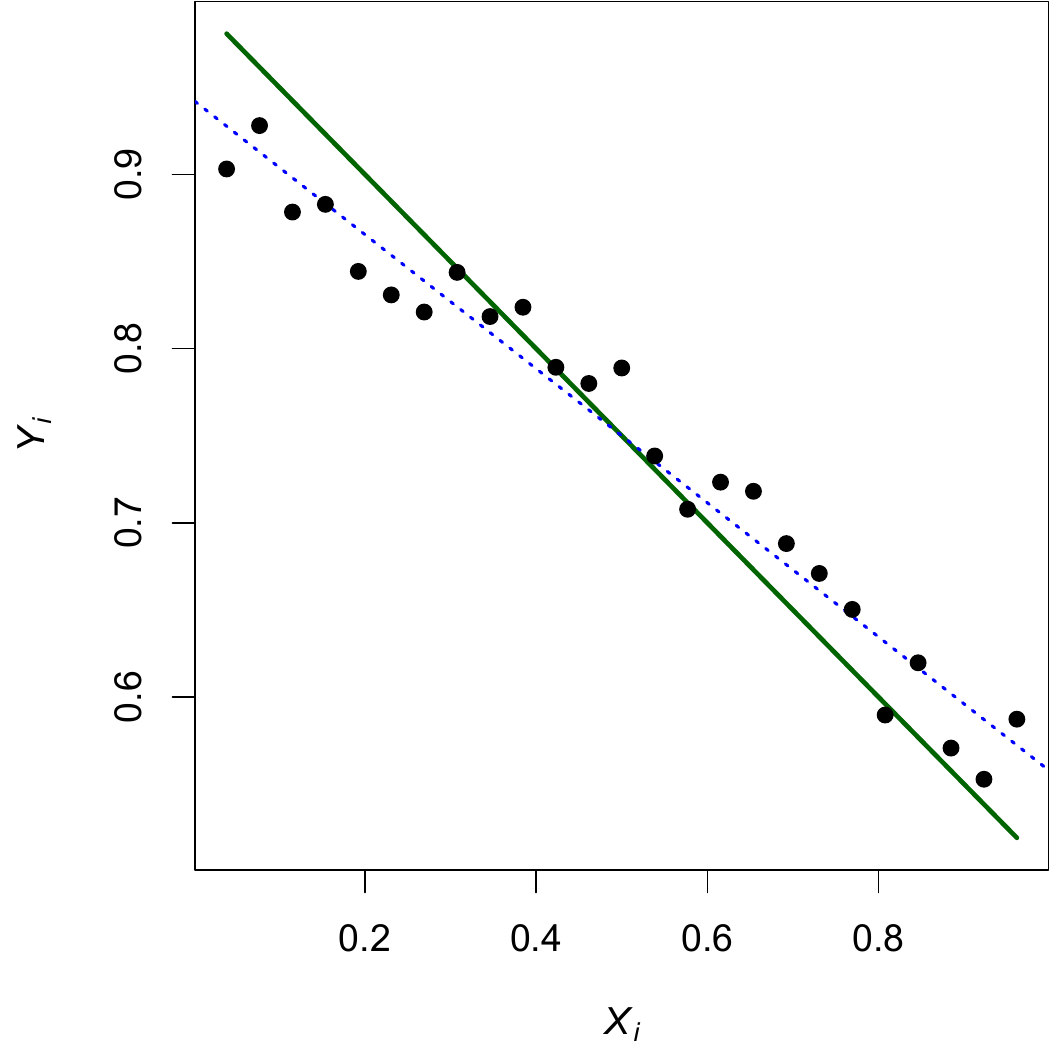}
\hfill
\includegraphics[width=0.49\textwidth]{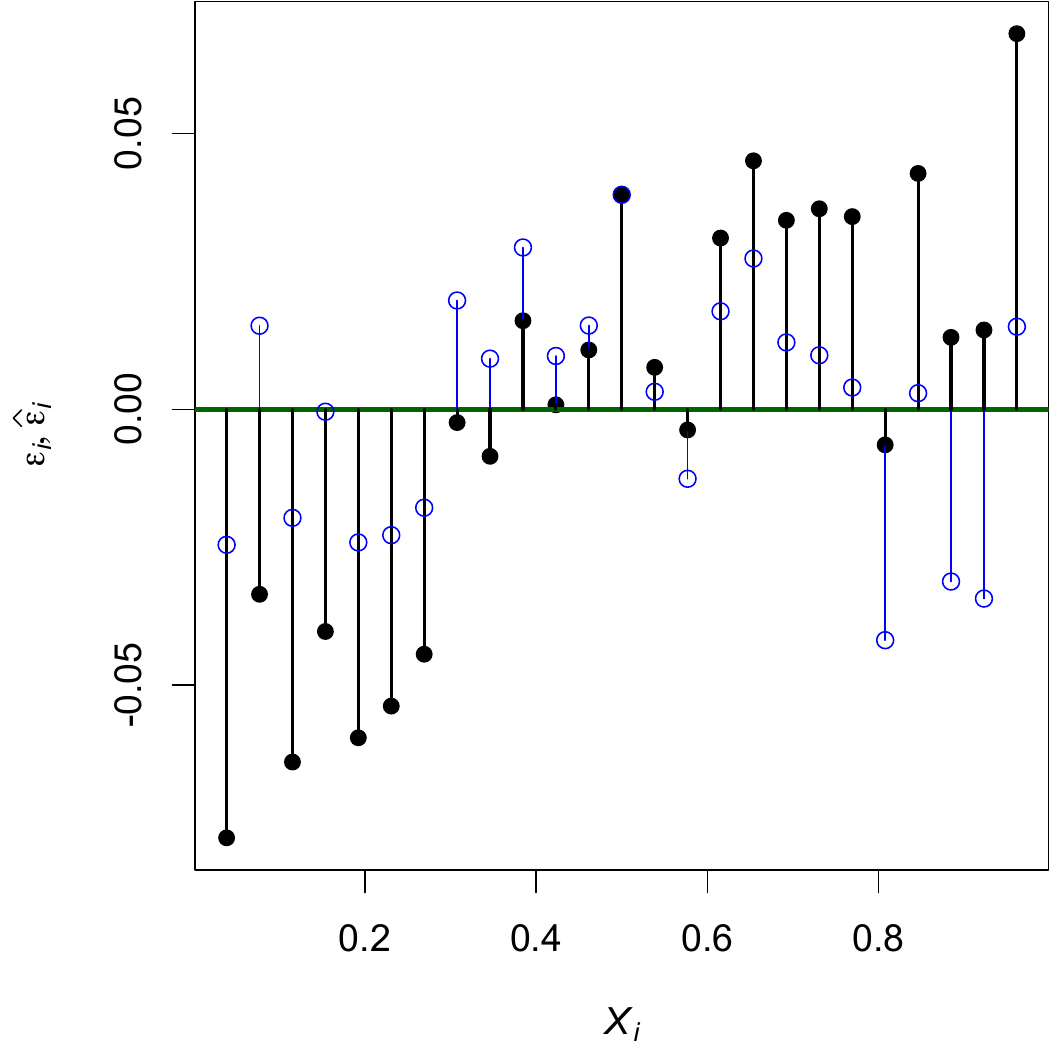}

\caption{Simple example: Left panel: Raw data $(X_i,Y_i)$ with $f_o$ (line) and $\hat{f}$ (dashed line). Right panel: Adjusted performances $\eps_i$ (bullets) and residuals $\hat{\eps}_i$ (circles).}
\label{fig:Demo1ab}
\end{figure}

\begin{figure}
\centering
\includegraphics[width=0.60\textwidth]{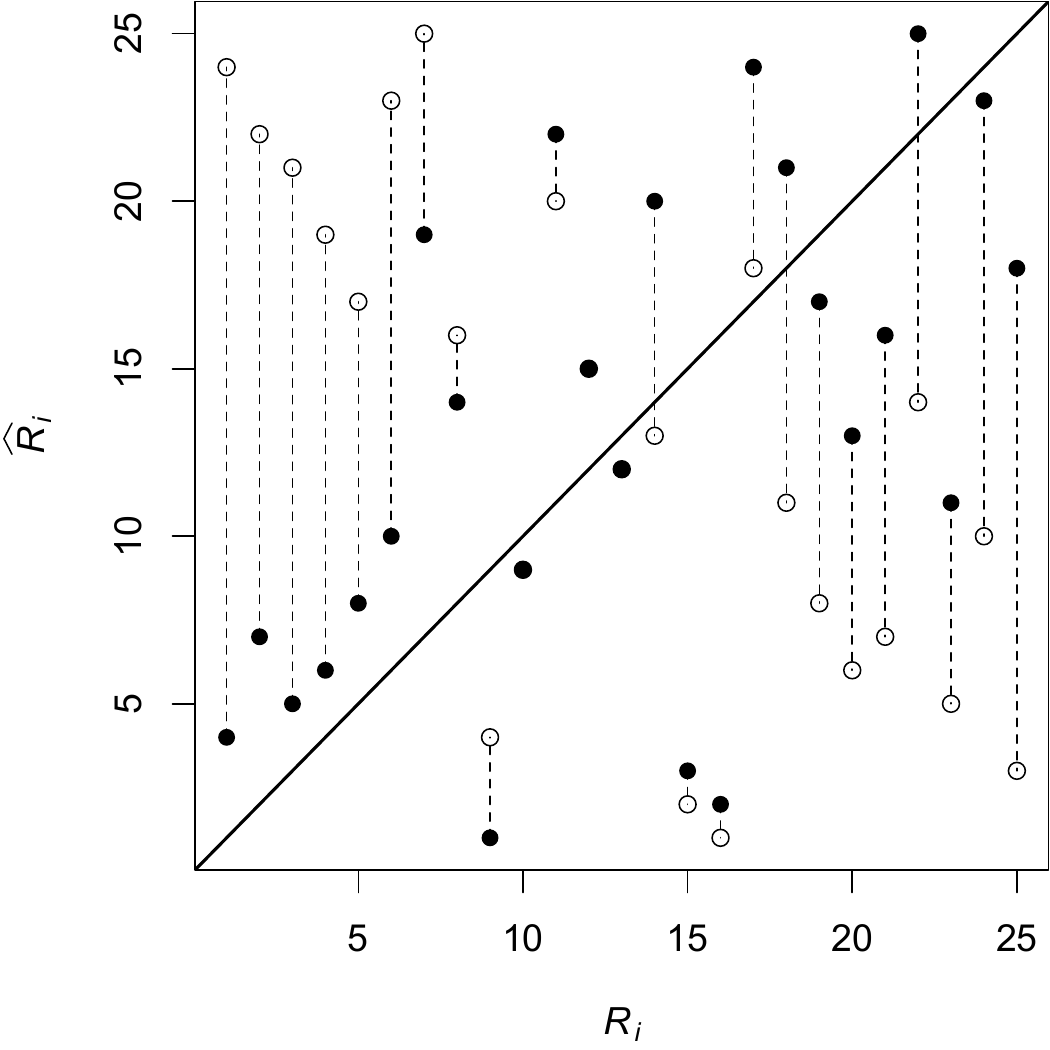}

\caption{Simple example: Ranks $R_i$ of the adjusted performances $\eps_i$ and estimated ranks $\hat{R}_i$ based on residuals $\hat{\eps}_i$ (bullets) or raw data $Y_i$ (circles).}
\label{fig:Demo1c}
\end{figure}

\paragraph{Partial performance.}
The previous considerations and simple example illustrate the dilemma we have to deal with: If the sample correlation between $f_o(\X)$ and $\Y$ is substantial, it is better to estimate $f_o$ than to ignore it. But then $\hat{\beps}$ measures only \textsl{partial performance}. That means, roughly saying, the vector $\hat{\beps}$ contains only those aspects of the adjusted performances which are not correlated with $\X$. Precisely, $\hat{\beps}$ is the orthogonal projection of $\beps$ onto the linear space $\FF(\X)^\perp$.

Whenever one uses $\hat{\beps}$ to quantify performance of the $n$ units, one should keep in mind that substantial parts of the adjusted performances $\eps_i$ may be missing, and even the sign of $\hat{\eps}_i$ may differ from the sign of $\eps_i$. This is particularly important whenever analyses of $\hat{\beps}$ are used to create incentives for better performance. If the different units are figuring out ways to improve performance, it may happen that self-evident steps are somehow related to $\X$. In that case, even if these measures are successful, the resulting improvements may disappear in the regression adjustment. Somehow this gives a new twist to the well-known phenomenon of `regression towards mediocrity'.

\paragraph{Further `leakage' due to misspecified models.}
Even in case of \eqref{eq:eps.perp.FF}, the estimator $\hat{\beps}$ may contain systematic errors due to model-misspecification. In general,
\[
	\Y \ = \ \bs{\mu} + \beps
\]
with $\bs{\mu} = f_o(\X)$ if the $X_i \in \XX$ contain all relevant covariables. Even then it may happen that $f_o \not\in \FF$. If we missed some important covariables $X_i' \in \XX'$, then $\bs{\mu} = \bigl( f_*(X_i,X_i') \bigr)_{i=1}^n$ with some regression function $f_* : \XX \times \XX' \to \R$. In both cases it is likely that $\bs{\mu} \not\in \FF(\X)$, and
\begin{align*}
	\hat{\beps} \
	&= \ (\bs{I} - \bs{H}) \bs{\mu} + (\bs{I} - \bs{H}) \beps \\
	&= \ (\bs{I} - \bs{H}) \bs{\mu}	+ \beps \qquad \text{under} \ \eqref{eq:eps.perp.FF} .
\end{align*}
Here $\bs{I} \in \R^{n \times n}$ denotes the identity matrix, and $\bs{H} \in \R^{n\times n}$ describes the orthogonal projection onto $\FF(\X)$, see also Section~\ref{sec:Suppose}.

\paragraph{Performances and random errors.}
Note that we treated the vector $\beps$ as fixed rather than random in the preceding considerations. Now one could think of
\[
	\eps_i \ = \ \pi_i + \delta_i
\]
with adjusted performances $\pi_i$ and truly random errors $\delta_i$ representing random fluctuations which are neither related to the $X_i$ nor to the units' abilities or failures. Without more advanced sampling schemes and certain assumptions on the $\pi_i$, however, there is no possibility to estimate the latter quantities.

In traditional regression applications, people are mainly interested in $f_o$ as a means to describe the relation between $X_i$ and $Y_i$ and to predict $Y_o$ from $X_o$ for future observations $(X_o,Y_o)$ which are indpendent of the given pairs $(X_i,Y_i)$, $1 \le i \le n$. Here the $\eps_i$ represent measurement or sampling errors which are modelled as random and considered to be an unavoidable nuisance. The residuals $\hat{\eps}_i$ are only used to estimate certain properties of the random errors $\eps_i$ and to validate or falsify certain assumptions on these. Using regression as described previously to create ``corrected values'' $\hat{\eps}_i$ as a surrogate for the raw values $Y_i$ is not what the method is designed for; therefore the provocative word `abusing' in the title of this report.

\paragraph{Data envelopement analysis and quantile regression.}
One should mention here an established method of benchmarking, called data envelopement analysis (DEA), initiated by Farrel (1957) and Charnes et al.\ (1978). Very roughly saying, in that approach one assumes that the $\eps_i$ are non-positive (if higher values of $Y_i$ mean better performance), and $f_o(x)$ is the maximally achievable performance under the circumstances described by $x \in \XX$. The deviations $\eps_i$ are then estimated via a linear optimization method. The main reasons for using a regression approach rather than DEA seem to be the higher complexity of DEA, which makes it more difficult to communicate it to laymen, and the known sensitivity of DEA to errors in the data. Moreover, normal quantile-quantile plots of the residuals $\hat{\eps}_i$ often indicate a Gaussian distribution, whereas the DEA paradigm would predict a non-symmetric, left-skewed distribution.

If the residuals show indeed a left-skewed distribution, a possible compromise between least squares regression and DEA would be regression quantiles (cf.\ Koenker and Basset, 1978). That means, for a given parameter $\gamma \in (0,1)$ one determines a function $\hat{f}_\gamma$ minimising
\[
	\sum_{i=1}^n \rho_\gamma(Y_i - f(X_i))
\]
over all $f \in \FF$, where
\[
	\rho_\gamma(t)
	\ := \ \bigl( 1_{[t \ge 0]} \gamma + 1_{[t \le 0]} (1 - \gamma) \bigr) |t|
\]
for $t \in \R$. An advantage of this approach would be that the estimator $\hat{f}_\gamma$ is less sensitive to outliers in the $Y_i$ than the least squares estimator $\hat{f}$. For $\gamma$ close to one, the estimated function $\hat{f}_\gamma$ would be close to the envelope function $f_o$.

\section{Ranking errors in a best case scenario}
\label{sec:Suppose}

Let us set aside all reservations towards estimating $f_o$ by the least squares estimator $\hat{f}$. Rather than considering $\beps$ to be a fixed vector satisfying \eqref{eq:eps.perp.FF}, let us assume that its components $\eps_i$ are independent and identically distributed random variables with centered Gaussian distribution,
\[
	\eps_i \ \sim \ \mathcal{N}(0,\sigma^2)
\]
with unknown standard deviation $\sigma > 0$. Under the additional assumption that $f_o$ is contained in our model $\FF$, the estimator $\hat{f}$ and the residuals $\hat{\eps}_i$ are unbiased in the sense that
\[
	\Ex \hat{f}(x) \ = \ f_o(x)
	\quad\text{and}\quad
	\Ex (\hat{\eps}_i - \eps_i) \ = \ 0 .
\]

\paragraph{Leverage.}
Using the $\hat{\eps}_i$ as surrogates for the $\eps_i$, however, is problematic because of the well-known phenomenon of \textsl{leverage}: Observations $(X_i,Y_i)$ with ``exotic'' part $X_i$ tend to produce residuals $\hat{\eps}_i$ with smaller modulus than the adjusted performances $\eps_i$. Precisely, let $\bs{H} \in \R^{n\times n}$ be the so-called hat matrix for the given model $\FF$. That means, if $f_1, f_2, \ldots, f_p$ are basis functions of $\FF$, and if the corresponding design matrix
\[
	\bs{D} \ = \ \bigl[ f_1(\X), f_2(\X), \ldots, f_p(\X) \bigr]
	\ \in \ \R^{n\times p}
\]
has full rank $p < n$, then
\[
	\bs{H} \ = \ \bs{D} (\bs{D}^\top \bs{D})^{-1} \bs{D}^\top
\]
describes the orthogonal projection from $\R^n$ onto the $p$-dimensional model space $\FF(\X)$. The matrix $\bs{H}$ does not depend on the particular choice of basis functions $f_1, \ldots, f_p$ but on the tuple $\X$ of covariable vectors $X_i$ and the model $\FF$. Now
\[
	\hat{\beps} \ = \ (\bs{I} - \bs{H}) \beps .
\]
Elementary calculations and the fact that $\bs{H}^\top = \bs{H} = \bs{H}^2$ show that
\[
	\Ex(\hat{\eps}_i^2) \ = \ (1 - H_{ii}) \sigma^2
	\quad\text{and}\quad
	\mathrm{Corr}(\hat{\eps}_i, \eps_i) \ = \ \sqrt{1 - H_{ii}} .
\]
Thus the modulus of $\hat{\eps}_i$ may be systematically too small and the correlation of $\hat{\eps}_i$ and $\eps_i$ substantially smaller than $1$ in case of high values of the leverage $H_{ii} \in [0,1]$ of the $i$-th observation part $X_i$. Note also that
\[
	\sum_{i=1}^p H_{ii} \ = \ p
\]
whence
\[
	\max_{i=1,2,\ldots,n} H_{ii} \ \ge \ \frac{p}{n} .
\]
Thus a rather complex model with high dimension $p$ relative to the sample size will automatically yield observations with high leverages $H_{ii}$. But even in case of $p \ll n$ some leverages may be substantial.

\paragraph{Ranking errors.}
Suppose one uses the estimated performances $\hat{\eps}_i$ to rank the units. That means, one computes the rank $\hat{R}_i$ of $\hat{\eps}_i$ within $\hat{\beps}$ as a proxy for the rank $R_i$ of $\eps_i$ within $\beps$,
\[
	R_i
	\ = \ \sum_{j=1}^n 1_{[\eps_j \le \eps_i]} .
\]
In what follows we derive explicit expressions for the root mean squared ranking errors,
\[
	\sqrt{ \Ex \bigl( (\hat{R}_i - R_i)^2 \bigr) } .
\]

First of all, under a mild regularity condition on the hat matrix $\bs{H}$, the residuals $\hat{\eps}_i$ are pairwise different:

\begin{Lemma}
\label{Lemma:hat matrix}
For arbitrary indices $1 \le i < j \le n$,
\[
	\Pr \bigl( \hat{\eps}_i = \hat{\eps}_j \bigr)
	\ = \ \begin{cases}
		1 & \text{if} \ H_{ii} = H_{jj} = H_{ij} + 1 , \\
		0 & \text{else} .
	\end{cases}
\]
The condition $H_{ii} = H_{jj} = H_{ij} + 1$ implies that $H_{ii} \ge 1/2$.
\end{Lemma}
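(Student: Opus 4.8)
The plan is to reduce the whole statement to the Gaussian nature of a single linear functional of $\bs{\eps}$. First I would write $\hat{\eps}_i - \hat{\eps}_j = (\bs{e}_i - \bs{e}_j)^\top \hat{\bs{\eps}} = (\bs{e}_i - \bs{e}_j)^\top (\bs{I} - \bs{H}) \bs{\eps} = \bs{a}^\top \bs{\eps}$, where $\bs{a} := (\bs{I} - \bs{H})(\bs{e}_i - \bs{e}_j)$ and $\bs{e}_k$ denotes the $k$-th standard basis vector; here I use the symmetry of $\bs{I} - \bs{H}$. Since $\bs{\eps} \sim \mathcal{N}_n(\bs{0}, \sigma^2 \bs{I})$, the difference $\bs{a}^\top \bs{\eps}$ is a real Gaussian variable with mean $0$ and variance $\sigma^2 \|\bs{a}\|^2$. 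A Gaussian variable equals a fixed constant almost surely when its variance vanishes, and with probability zero otherwise, so $\Pr(\hat{\eps}_i = \hat{\eps}_j)$ equals $1$ if $\bs{a} = \bs{0}$ and $0$ if $\bs{a} \ne \bs{0}$. Thus the whole statement reduces to characterizing when $\bs{a} = \bs{0}$.

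Next I would exploit that $\bs{I} - \bs{H}$ is itself an orthogonal projection, so $\|\bs{a}\|^2 = (\bs{e}_i - \bs{e}_j)^\top (\bs{I}-\bs{H})^\top (\bs{I}-\bs{H})(\bs{e}_i-\bs{e}_j) = (\bs{e}_i - \bs{e}_j)^\top (\bs{I}-\bs{H})(\bs{e}_i-\bs{e}_j)$. Expanding and using $H_{ij}=H_{ji}$ gives $\|\bs{a}\|^2 = 2 - H_{ii} - H_{jj} + 2H_{ij}$. Because a squared norm vanishes exactly when the vector does, $\bs{a} = \bs{0}$ is equivalent to $\bs{H}(\bs{e}_i - \bs{e}_j) = \bs{e}_i - \bs{e}_j$, i.e.\ to $\bs{e}_i - \bs{e}_j$ lying in the range of $\bs{H}$. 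Reading off the $i$-th and $j$-th coordinates of this vector identity yields $H_{ii} - H_{ij} = 1$ and $H_{jj} - H_{ij} = 1$, that is $H_{ii} = H_{jj} = H_{ij} + 1$; conversely, substituting these relations into the scalar formula for $\|\bs{a}\|^2$ returns $0$. This establishes the claimed dichotomy.

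The subtle point, and the step I would be most careful about, is precisely this last equivalence. The scalar condition $H_{ii} + H_{jj} - 2H_{ij} = 2$ looks weaker than the three-way equality $H_{ii} = H_{jj} = H_{ij} + 1$, but the projection structure upgrades it: once the variance $\sigma^2\|\bs{a}\|^2$ is zero the entire vector $\bs{a}$ vanishes, and then two coordinates of $\bs{H}(\bs{e}_i - \bs{e}_j) = \bs{e}_i - \bs{e}_j$ pin down all three entries simultaneously. Everything hinges on the idempotency and symmetry of $\bs{H}$, which are exactly the well-known facts recalled above.

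Finally, for the supplementary assertion $H_{ii} \ge 1/2$, I would invoke the Cauchy-Schwarz inequality for the projected basis vectors. Writing $H_{ij} = (\bs{H}\bs{e}_i)^\top (\bs{H}\bs{e}_j)$, $H_{ii} = \|\bs{H}\bs{e}_i\|^2$ and $H_{jj} = \|\bs{H}\bs{e}_j\|^2$, all valid because $\bs{H} = \bs{H}^\top = \bs{H}^2$, yields $H_{ij}^2 \le H_{ii} H_{jj}$. Inserting $H_{jj} = H_{ii}$ and $H_{ij} = H_{ii} - 1$ gives $(H_{ii}-1)^2 \le H_{ii}^2$, which simplifies to $H_{ii} \ge 1/2$. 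Equivalently and more geometrically, since $\bs{e}_i - \bs{e}_j$ lies in the range of $\bs{H}$, the squared length $H_{ii}$ of the projection of $\bs{e}_i$ onto that range dominates the squared length of its projection onto the line through $\bs{e}_i - \bs{e}_j$, which equals $\langle \bs{e}_i, \bs{e}_i - \bs{e}_j\rangle^2/\|\bs{e}_i - \bs{e}_j\|^2 = 1/2$.
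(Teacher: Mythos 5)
Your proposal is correct and follows essentially the same route as the paper: both reduce $\hat{\eps}_i - \hat{\eps}_j$ to a linear form $\bs{G}(\bs{e}_i - \bs{e}_j)^\top \bs{\eps}$ with $\bs{G} = \bs{I} - \bs{H}$, use the projection properties $\bs{G} = \bs{G}^\top = \bs{G}^2$ to show that the scalar condition $\|\bs{G}(\bs{e}_i - \bs{e}_j)\|^2 = 0$ is equivalent to the three-way equality $H_{ii} = H_{jj} = H_{ij}+1$, and obtain $H_{ii} \ge 1/2$ from the Cauchy--Schwarz inequality $|H_{ij}| \le \sqrt{H_{ii}H_{jj}}$. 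The only cosmetic difference is that you invoke Gaussianity (zero variance versus positive variance) where the paper argues via continuity of the distribution of the linear form, which is why the paper's version extends verbatim to independent errors with arbitrary continuous distributions.
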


This lemma remains valid if the errors $\eps_1, \eps_2, \ldots, \eps_n$ are only assumed to be independent with continuous distributions. An immediate consequence of Lemma~\ref{Lemma:hat matrix} is that the residuals $\hat{\eps}_i$ are pairwise different almost surely, whenever $H_{ii} \ge 1/2$ for at most one index $i$. In particular, with probability one,
\[
	\hat{R}_i \ = \ \sum_{j=1}^n 1_{[\hat{\eps}_j \le \hat{\eps}_i]} .
\]

Here is a first main result about the ranks $R_i$ and $\hat{R}_i$:

\begin{Theorem}
\label{Theorem:exact}
Suppose that $H_{ii} \ge 1/2$ for at most one index $i \in \{1,2,\ldots,n\}$. Then for arbitrary indices $i,j \in \{1,2,\ldots,n\}$,
\begin{align*}
	\Ex \bigl( (\hat{R}_i - R_i) (\hat{R}_j - R_j) & \bigr) \ = \\
	\frac{1}{2\pi} \sum_{k,\ell=1}^n \biggl(
		& \arcsin \Bigl( \frac{\Delta_{k\ell,ij}}{2} \Bigr)
		+ \arcsin \Bigl( \frac{\Delta_{k\ell,ij} - H_{k\ell,ij}}
			{\sqrt{(2 - H_{kk,ii})(2 - H_{\ell\ell,jj})}} \Bigr) \\
		& - \ \arcsin \Bigl( \frac{\Delta_{k\ell,ij} - H_{k\ell,ij}}
			{\sqrt{2 (2 - H_{kk,ii})}} \Bigr)
		- \arcsin \Bigl( \frac{\Delta_{k\ell,ij} - H_{k\ell,ij}}
			{\sqrt{2 (2 - H_{\ell\ell,jj})}} \Bigr) \biggr) ,
\end{align*}
where $\Delta_{k\ell,ij} := \delta_{k\ell} + \delta_{ij} - \delta_{kj} - \delta_{i\ell}$ and $H_{k\ell,ij} := H_{k\ell} + H_{ij} - H_{kj} - H_{i\ell}$. In particular,
\begin{align*}
	\Ex \bigl( (\hat{R}_i - R_i)^2 \bigr)
		\ = \
		\frac{1}{2\pi} \sum_{k=1}^n
		\biggl( & \pi - 2 \arccos \Bigl( \sqrt{H_{kk,ii}/2} \Bigr) \biggr) \ + \\
	\frac{1}{\pi} \sum_{1 \le k < \ell \le n}
		\biggl( & \frac{\pi}{6} + \arcsin \Bigl( \frac{1 - H_{k\ell,ii}}
				{\sqrt{(2 - H_{kk,ii})(2 - H_{\ell\ell,ii})}} \Bigr) \\
	& - \ \arcsin \Bigl( \frac{1 - H_{k\ell,ii}}{\sqrt{2 (2 - H_{kk,ii})}} \Bigr)
			- \arcsin \Bigl( \frac{1 - H_{k\ell,ii}}{\sqrt{2 (2 - H_{\ell\ell,ii})}}
			\Bigr) \biggr) .
\end{align*}
\end{Theorem}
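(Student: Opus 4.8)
The plan is to write each rank as a sum of indicator functions and thereby reduce the whole quantity to a sum of bivariate Gaussian orthant probabilities. Since the hypothesis together with Lemma~\ref{Lemma:hat matrix} guarantees that the residuals are pairwise distinct almost surely (and the errors are trivially so, being continuous), I can write $R_i = \sum_{k=1}^n \mathbf{1}\{\eps_k \le \eps_i\}$ and $\hat{R}_i = \sum_{k=1}^n \mathbf{1}\{\hat{\eps}_k \le \hat{\eps}_i\}$ almost surely, so that
\[
	\hat{R}_i - R_i \ = \ \sum_{k=1}^n \bigl( \mathbf{1}\{\hat{\eps}_k \le \hat{\eps}_i\} - \mathbf{1}\{\eps_k \le \eps_i\} \bigr) .
\]
Multiplying the analogous expressions for indices $i$ and $j$ and taking expectations turns $\Ex\,(\hat{R}_i - R_i)(\hat{R}_j - R_j)$ into a double sum over $k,\ell$ of expectations of products of two such differences. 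Expanding each product gives four indicator products, and the expectation of each is of the form $\Pr(W_1 \le 0,\, W_2 \le 0)$ for a centered bivariate Gaussian pair $(W_1,W_2)$.

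The next step is to identify these Gaussian pairs and compute their second moments. Setting $U_{ki} := \eps_k - \eps_i$ and $\hat{U}_{ki} := \hat{\eps}_k - \hat{\eps}_i$, every indicator above is $\mathbf{1}\{U_{ki} \le 0\}$ or $\mathbf{1}\{\hat{U}_{ki} \le 0\}$, and all of these are centered and jointly Gaussian as linear images of $\bs{\eps}$. Writing $\hat{\eps}_k = ((\bs{I} - \bs{H})\bs{e}_k)^\top \bs{\eps}$ with $\bs{e}_1,\ldots,\bs{e}_n$ the standard basis of $\R^n$, and using that $\bs{I} - \bs{H}$ is a symmetric projection, so $(\bs{I}-\bs{H})^2 = \bs{I}-\bs{H}$, I would obtain
\[
	\Cov(U_{ki}, U_{\ell j}) = \sigma^2 \Delta_{k\ell,ij}, \qquad
	\Cov(\hat{U}_{ki}, \hat{U}_{\ell j}) = \Cov(U_{ki}, \hat{U}_{\ell j}) = \Cov(\hat{U}_{ki}, U_{\ell j}) = \sigma^2 (\Delta_{k\ell,ij} - H_{k\ell,ij}) ,
\]
since $\Delta_{k\ell,ij} = (\bs{e}_k - \bs{e}_i)^\top(\bs{e}_\ell - \bs{e}_j)$ and $H_{k\ell,ij} = (\bs{e}_k - \bs{e}_i)^\top \bs{H} (\bs{e}_\ell - \bs{e}_j)$. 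In particular $\Var(U_{ki}) = 2\sigma^2$ and $\Var(\hat{U}_{ki}) = \sigma^2(2 - H_{kk,i})$ for $k \ne i$, and these residual variances are strictly positive, because $\Var(\hat{U}_{ki}) = 0$ would force $H_{ii},H_{kk} \ge 1/2$ for the two distinct indices $i,k$ by Lemma~\ref{Lemma:hat matrix}, contradicting the hypothesis. Dividing each covariance by the two relevant standard deviations then yields exactly the four correlation arguments inside the four $\arcsin$ terms of the asserted general formula.

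The key tool is the classical orthant identity $\Pr(W_1 \le 0, W_2 \le 0) = 1/4 + (2\pi)^{-1}\arcsin\rho$ for a centered bivariate Gaussian with correlation $\rho$, which remains valid at $\rho = \pm 1$ by continuity and so also covers the diagonal terms with $k = \ell$, $i = j$. Applying it to the four expectations, the four constants $1/4$ enter with signs $+,-,-,+$ and cancel, leaving precisely the four $\arcsin$ terms with prefactor $(2\pi)^{-1}$; summing over $k,\ell$ gives the general formula. The main obstacle here is bookkeeping rather than depth: keeping the distinct covariance types and their signs straight, and — crucially — securing the nondegeneracy so that every correlation is well defined and lies in $[-1,1]$, which is exactly where the hypothesis and Lemma~\ref{Lemma:hat matrix} enter. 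Terms with $k = i$ or $\ell = j$ require no special treatment, since there the two indicators in the corresponding difference coincide and the summand vanishes.

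For the special case $j = i$ I would specialise the general formula and split the double sum into its diagonal ($k = \ell$) and off-diagonal ($k \ne \ell$) parts. On the diagonal, $\Delta_{kk,i} = 2$ for $k \ne i$, so the first two arcsines both equal $\arcsin 1 = \pi/2$, while the third and fourth coincide and equal $\arcsin\sqrt{1 - H_{kk,i}/2} = \arccos\sqrt{H_{kk,i}/2}$; this produces the first displayed sum. Off the diagonal, $\Delta_{k\ell,i} = 1$ for $k,\ell \ne i$, so $\arcsin(\Delta_{k\ell,i}/2) = \pi/6$, and pairing the ordered terms $(k,\ell)$ and $(\ell,k)$ — which interchange the third and fourth arcsines by the symmetry of the expressions in $k,\ell$ — converts $\sum_{k,\ell}$ with prefactor $(2\pi)^{-1}$ into $\sum_{k<\ell}$ with prefactor $\pi^{-1}$, giving the second displayed sum. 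A short check, using $H_{ii,i} = 0$ and $\arcsin(1/2) = \pi/6$, shows that the terms with $k = i$ or $\ell = i$ contribute zero, so the two sums may be written over all indices exactly as stated.
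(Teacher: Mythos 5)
Your proposal is correct and follows essentially the same route as the paper's own proof: representing the ranks as sums of indicators, expanding $\Ex\,(\hat{R}_i - R_i)(\hat{R}_j - R_j)$ into a double sum of four bivariate Gaussian orthant probabilities, evaluating these by the $\arcsin$ identity (the paper's Lemma~\ref{Lemma:arcsin}), computing the same covariances $\sigma^2\Delta_{k\ell,ij}$ and $\sigma^2(\Delta_{k\ell,ij}-H_{k\ell,ij})$, and then handling the case $j=i$ by the same diagonal/off-diagonal split, the pairing of $(k,\ell)$ with $(\ell,k)$, and the identity $\arcsin\sqrt{1-t}=\arccos\sqrt{t}$. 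Your explicit check that the hypothesis (via Lemma~\ref{Lemma:hat matrix}) excludes degenerate variances $\Var(\hat{\eps}_k - \hat{\eps}_i)=0$, so that all correlations are well defined, is a detail the paper leaves implicit; it is a welcome addition but does not change the substance of the argument.
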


Here and throughout $\delta_{st}$ denotes Kronecker's symbol, $\delta_{st} = 1_{[s = t]}$. Theorem~\ref{Theorem:exact} is useful for exact numerical calculations. Numerical experiments reveal also that the rank distortions are closely related to the leverages $H_{ii}$. Here is a theoretical result about the rank distortions in case of small maximal leverage:

\begin{Theorem}
\label{Theorem:approximate}
Suppose that the column space of $\bs{D}$ contains the constant vectors, i.e.\ $\bs{H} \bs{1} = \bs{1} := (1)_{i=1}^n$. Then, as $\eta := \max_{i=1,\ldots,n} H_{ii} \to 0$,
\[
	\Ex \bigl( (\hat{R}_i - R_i)(\hat{R}_j - R_j) \bigr)
	\ = \ \frac{n^2 H_{ij} - n}{2 \sqrt{4 - \delta_{ij}} \, \pi}
			+ O \bigl( n \eta^{1/2} + n^2 \eta^2 \bigr)
\]
uniformly in $i,j \in \{1,2,\ldots,n\}$.
\end{Theorem}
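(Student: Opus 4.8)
The plan is to begin from the exact bilinear formula in Theorem~\ref{Theorem:exact} and to expand each summand as $\eta\to0$. Abbreviate $h:=H_{k\ell,ij}$, $a:=H_{kk,i}$, $b:=H_{\ell\ell,j}$ and $\Delta:=\Delta_{k\ell,ij}$, and let $\Phi_{k\ell}$ denote the summand, so that $\Ex\,(\hat{R}_i-R_i)(\hat{R}_j-R_j)=\frac{1}{2\pi}\sum_{k,\ell}\Phi_{k\ell}$. The algebraic identities I would record first are $H_{k\ell,ij}=(\bs{e}_k-\bs{e}_i)^\top\bs{H}(\bs{e}_\ell-\bs{e}_j)$, from which $\bs{H}=\bs{H}^\top\bs{H}$ gives $a,b\ge0$, $a,b=O(\eta)$ and $|h|\le\sqrt{ab}$ by Cauchy--Schwarz; together with the consequences of $\bs{H}\bs{1}=\bs{1}$, namely $\sum_\ell H_{k\ell}=1$ and hence $\sum_k H_{kk,i}=p+nH_{ii}-2=O(n\eta)$. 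A direct computation then evaluates the leading sum exactly: $\sum_{k,\ell}H_{k\ell,ij}=n+n^2H_{ij}-n-n=n^2H_{ij}-n$, which already exhibits the numerator of the claimed main term.

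Next I would write, with $g(a,b):=\arcsin\bigl((\Delta-h)/\sqrt{(2-a)(2-b)}\bigr)$ and $g(0,0)=\arcsin((\Delta-h)/2)$,
\[
	\Phi_{k\ell}
	= \Bigl[\arcsin\bigl(\tfrac{\Delta}{2}\bigr)-\arcsin\bigl(\tfrac{\Delta-h}{2}\bigr)\Bigr]
	+ \bigl[g(a,b)-g(a,0)-g(0,b)+g(0,0)\bigr].
\]
On the generic set $G:=\{(k,\ell):k\ne\ell,\ k,\ell\notin\{i,j\}\}$ one checks $\Delta^2=\delta_{ij}$ (indeed $\Delta=0$ there when $i\ne j$ and $\Delta=1$ when $i=j$), the arcsine arguments stay uniformly bounded away from $\pm1$, and both brackets are smooth. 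The first bracket equals $h/\sqrt{4-\delta_{ij}}+O(h^2)$, while the second, being a mixed second difference, equals $\int_0^a\!\int_0^b\partial_s\partial_t g\,dt\,ds=O(ab)$. Summing over $G$ via $h^2\le ab$ and $\sum_k a_k=O(n\eta)$ gives $\sum_G O(h^2+ab)\le C(\sum_k a_k)(\sum_\ell b_\ell)=O(n^2\eta^2)$, so that $\sum_G\Phi_{k\ell}=\tfrac{1}{\sqrt{4-\delta_{ij}}}\sum_G h+O(n^2\eta^2)$, and $\sum_G h=n^2H_{ij}-n+O(n\eta)$ since the complement $G^c$ has $O(n)$ indices with $|h|=O(\eta)$.

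It remains to treat $G^c$, which splits into non-boundary indices (where $|\Delta|\le1$, giving $\Phi_{k\ell}=O(|h|)=O(\eta)$, hence $O(n\eta)$ in total) and boundary indices with $\Delta=\pm2$, where the smooth expansion fails. For $i=j$ these are the $n$ diagonal terms $k=\ell$, at which $a=b=h=H_{kk,i}$ and $\Phi_{kk}$ collapses exactly to $\pi-2\arccos(\sqrt{H_{kk,i}/2})$; for $i\ne j$ there is the single term $(k,\ell)=(j,i)$, at which $\Phi$ collapses to $-\pi+2\arccos(\sqrt{H_{jj,i}/2})$. Using $\arccos(\sqrt{x})=\pi/2-\sqrt{x}+O(x^{3/2})$ yields $\Phi=\pm\sqrt{2H_{kk,i}}+O(\eta^{3/2})$ at each such term; since the value $h/\sqrt{4-\delta_{ij}}=O(\eta)$ already counted in the main sum is of lower order there, each boundary term costs $O(\eta^{1/2})$, and with at most $n$ of them this contributes $O(n\eta^{1/2})$. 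Collecting the three error contributions $O(n^2\eta^2)$, $O(n\eta)$ and $O(n\eta^{1/2})$, absorbing $O(n\eta)$ into $O(n\eta^{1/2})$, and dividing by $2\pi$ gives the assertion, uniformly in $i,j$ because every estimate depends only on $\eta$ and on the $O(n)$-versus-$1$ count of exceptional indices.

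I anticipate the main obstacle to be precisely the boundary analysis. One must verify by a short but case-heavy computation of $\Delta_{k\ell,ij}$, over all configurations of $k,\ell$ relative to $i,j$, that the arcsine arguments reach $\pm1$ only at the indices listed; that $\Phi$ simplifies there to the stated one-variable expressions; and, crucially, that the resulting terms sum to $O(n\eta^{1/2})$ rather than something larger, which follows from $\sum_k\sqrt{H_{kk,i}}\le\sqrt{n}\,(\sum_k H_{kk,i})^{1/2}=O(n\eta^{1/2})$. Establishing the uniform bound on $\partial_a\partial_b g$ over the generic regime (so that the mixed second difference is genuinely $O(ab)$) is the other delicate point, but it becomes routine once the boundary indices have been isolated and the argument is kept bounded away from $\pm1$.
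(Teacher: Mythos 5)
Your proof is correct, and its skeleton coincides with the paper's: both start from the exact formula of Theorem~\ref{Theorem:exact}, linearize the arcsine expressions to extract $H_{k\ell,ij}/\sqrt{4-\delta_{ij}}$ per summand, and finish with the identity $\sum_{k,\ell}H_{k\ell,ij}=n^2H_{ij}-n$ coming from $\bs{H}\bs{1}=\bs{1}$. The error analysis, however, follows a genuinely different route. The paper never splits the index set: it first establishes the H\"older-type bound $|\arcsin(x)-\arcsin(y)|\le (\pi/\sqrt{2})\sqrt{|x-y|}$, uses it to bound each four-arcsine summand by $O(\eta^{1/2})$ for both $d=\Delta_{k\ell,ij}$ and $d=\delta_{ij}$, replaces $\Delta_{k\ell,ij}$ by $\delta_{ij}$ in the $O(n)$ summands where they differ (cost $O(n\eta^{1/2})$), and then Taylor-expands all $n^2$ summands simultaneously, with a grouping of the four arcsines under which the first-order contributions of $H_{kk,i}$ and $H_{\ell\ell,j}$ cancel, leaving $H_{k\ell,ij}/\sqrt{4-\delta_{ij}}+O(\eta^2)$. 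You instead isolate the exceptional pairs $G^c$, handle the generic pairs by a first difference plus a mixed second difference $\int_0^a\!\int_0^b\partial_s\partial_t g\,dt\,ds=O(ab)$ (the same cancellation as the paper's grouping, seen more structurally), and---where the paper applies the crude H\"older bound---you evaluate the boundary summands with $\Delta=\pm2$ in closed form as $\pm\bigl(\pi-2\arccos(\sqrt{H_{kk,i}/2})\bigr)=\pm\sqrt{2H_{kk,i}}+O(\eta^{3/2})$. Your version is longer because of the case analysis over $G^c$, but it buys something real: it shows the $O(n\eta^{1/2})$ term is not an artifact of a modulus-of-continuity estimate but is genuinely present (for $i=j$ it equals $\frac{1}{2\pi}\sum_k\sqrt{2H_{kk,i}}$ up to smaller order), so your argument could be upgraded to give an explicit second-order correction, and it avoids needing the optimal-constant arcsine inequality as a separate lemma. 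A side remark in your favor: your count of exceptional pairs as $O(n)$ (including the pairs with $k=j$ or $\ell=i$, where $\Delta=-1$) is more accurate than the paper's claim that $\Delta_{k\ell,ij}\ne\delta_{ij}$ for at most $n+2$ pairs, which undercounts; the discrepancy is harmless for the theorem since only the order $O(n)$ enters the bound.
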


\paragraph{A numerical example.}
Suppose that $n = 70$, and let $\bs{D}$ be equal to
\[
	\bs{D}^{(1)} \ := \ \begin{bmatrix}
		1 & X_1 \\
		1 & X_2 \\
		\vdots & \vdots \\
		1 & X_n
	\end{bmatrix}
	\quad\text{or}\quad
	\bs{D}^{(2)} \ := \ \begin{bmatrix}
		1 & X_1 & X_1^2\\
		1 & X_2 & X_2^2\\
		\vdots & \vdots & \vdots\\
		1 & X_n & X_n^2
	\end{bmatrix} ,
\]
the design matrix for simple linear or quadratic regression, where $X_1 < X_2 < \cdots < X_n$ are equispaced numbers. The maximal leverage $\max_i H_{ii}$ is equal to $0.0559$ for $\bs{D}^{(1)}$ and $0.1215$ for $\bs{D}^{(2)}$. Figure~\ref{fig:Example} shows the root mean squared ranking errors $\sqrt{\Ex \bigl( (\hat{R}_i - R_i)^2 \bigr)}$ for both cases. In addition the approximations $n \sqrt{(H_{ii} - n^{-1})/(2 \pi \sqrt{3})}$ and $n \sqrt{H_{ii}/(2 \pi \sqrt{3})}$ are shown as lines.

\begin{figure}[h]
\centering
\includegraphics[width=1.0\textwidth]{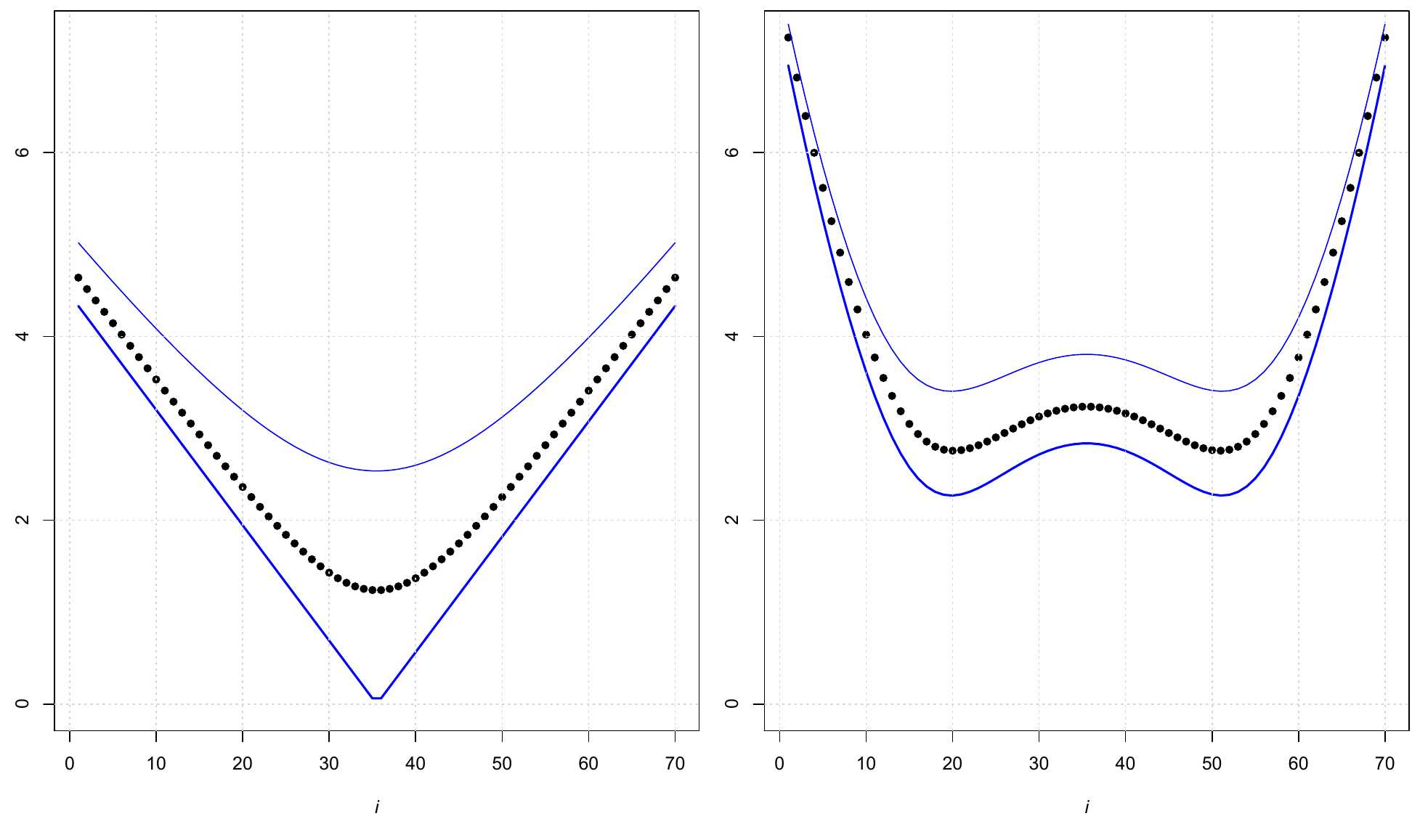}
\caption{Root mean squared ranking errors for simple linear regression (left) and quadratic regression (right) with $n=70$ equispaced $X$-values.}
\label{fig:Example}
\end{figure}

\paragraph{Heuristics.}
Now we present heuristic arguments to approximate the rank distortions which also indicates what may happen in non-Gaussian settings. Presumably these arguments could be made rigorous by applying similar techniques and arguments as Koul (1969, 1992), Loynes (1980) and Mammen (1996). As in Theorem~\ref{Theorem:approximate}, asymptotic statements are meant as $\eta = \max_{i=1,\ldots,n} H_{ii} \to 0$. We assume that the errors $\eps_1, \ldots, \eps_n$ are independent and identically distributed with finite standard deviation $\sigma$ and distribution function $F_\eps$ with bounded and uniformly continuous density $f_\eps$.

One can easily deduce from $\bs{H}^\top = \bs{H} = \bs{H}^2$ that
\begin{equation}
\label{eq:Hij}
	\Ex \bigl( (\bs{H}\bs{\eps})_i (\bs{H}\bs{\eps})_j \bigr)
	\ = \ \sigma^2 H_{ij} ,
\end{equation}
whereas the Cauchy-Schwarz inequality implies that
\[
	\bigl| \Ex \bigl( (\bs{H}\bs{\eps})_i (\bs{H}\bs{\eps})_j \bigr) \bigr|
	\ \le \ \mathrm{Std}((\bs{H}\bs{\eps})_i) \mathrm{Std}((\bs{H}\bs{\eps})_j)
	\ = \ \sigma^2 \sqrt{H_{ii} H_{jj}} .
\]
Hence
\begin{equation}
\label{eq:H and leverage}
	|H_{ij}| \ \le \ \eta
	\quad\text{for} \ 1 \le i,j \le n .
\end{equation}
 
Pretending that the empirical c.d.f.\ $\check{F}$ of the errors $\eps_i$ and the empirical c.d.f.\ $\hat{F}$ of the residuals $\hat{\eps}_i$ are sufficiently close to $F$, we write
\begin{align*}
	R_i \ = \ n \check{F}(\eps_i)
		\ &\approx \ n F_\eps(\eps_i) , \\
	\hat{R}_i \ = \ n \hat{F}(\hat{\eps}_i)
		\ &\approx \ n F_\eps(\hat{\eps}_i)
			\ = \ n F_\eps \bigl( \eps_i - (\bs{H}\bs{\eps})_i \bigr) .
\end{align*}
But $(\bs{H}\bs{\eps})_i$ is quite small, precisely,
\[
	\Ex \bigl( (\bs{H}\bs{\eps})_i^2 \bigr)
	\ = \ \sigma^2 H_{ii} \ \le \ \sigma^2 \eta
\]
by \eqref{eq:Hij}. Hence we write
\[
	\hat{R}_i - R_i
	\ \approx \ - n f_\eps(\eps_i) (\bs{H}\bs{\eps})_i .
\]
Moreover, for $i,j \in \{1,2,\ldots,n\}$ and $\ell \in \{i,j\}$,
\[
	(\bs{H}\bs{\eps})_\ell
	\ = \ \sum_{k=1}^n H_{\ell k} \eps_k
	\ \approx \ \sum_{k \not\in \{i,j\}} H_{\ell k} \eps_k ,
\]
because $\sum_{k \in \{i,j\}} H_{\ell k}\eps_k$ is very small in the sense that
\[
	\Ex \biggl( \Bigl( \sum_{k \in \{i,j\}} H_{\ell k}\eps_k \Bigr)^2 \biggr)
	\ = \ \sigma^2 \sum_{k \in \{i,j\}} H_{\ell k}^2
	\ \le \ 2 \sigma^2 \eta^2
\]
by \eqref{eq:H and leverage}. Thus we pretend that the random pairs $(\eps_i, \eps_j)$ and $\bigl( (\bs{H}\bs{\eps})_i, (\bs{H}\bs{\eps})_j \bigr)$ are stochastically independent and conjecture that
\begin{align}
	\nonumber
	\Ex \bigl( (\hat{R}_i - R_i)(\hat{R}_j - R_j) \bigr) \
	&\approx \ n^2 \, \Ex \bigl( f_\eps(\eps_i) f_\eps(\eps_j) (\bs{H}\bs{\eps})_i (\bs{H}\bs{\eps})_j \bigr) \\
	\nonumber
	&\approx \ n^2 \, \Ex \bigl( f_\eps(\eps_i) f_\eps(\eps_j) \bigr) 
		\Ex \bigl( (\bs{H}\bs{\eps})_i (\bs{H}\bs{\eps})_j \bigr) \\
	\label{eq:approximation}
	&= \ n^2 \sigma^2 \Ex \bigl( f_\eps(\eps_i) f_\eps(\eps_j) \bigr) H_{ij} .
\end{align}

Now consider the special case of $F = \Phi(\sigma^{-1} \cdot)$ and $f = \sigma^{-1} \phi(\sigma^{-1} \cdot)$ with the standard Gaussian c.d.f.\ $\Phi$ and density $\phi$. For $i \ne j$,
\begin{align*}
	\sigma^2 \Ex \bigl( f_\eps(\eps_i) f_\eps(\eps_j) \bigr) \
	&= \ \bigl( \sigma \Ex f_\eps(\eps_i) \bigr)^2 \\
	&= \ \biggl( \sigma^{-1} \int \phi(\sigma^{-1} x)^2 \, dx \biggr)^2
		\ = \ \biggl( (2\pi)^{-1/2} \int \phi(\sqrt{2} x) \, dx \biggr)^2 \\
	&= \ (4\pi)^{-1} ,
\intertext{and}
	\sigma^2 \Ex \bigl( f_\eps(\eps_i)^2 \bigr) \
	&= \ \sigma^{-1} \int \phi(\sigma^{-1} x)^3 \, dx
		\ = \ (2\pi)^{-1} \int \phi(\sqrt{3} x) \, dx \\
	&= \ (2 \sqrt{3} \, \pi)^{-1} .
\end{align*}
Hence the conjectured approximation \eqref{eq:approximation} equals
\[
	\frac{n^2 H_{ij}}{2 \sqrt{4 - \delta_{ij}} \, \pi} .
\]

\section{Alternative methods for case-by-case data}
\label{sec:Two-Way}

As indicated in the introduction, suppose that for each unit $i$ we have several observations (`cases')
\[
	(X_{ij},Y_{ij}) , \quad 1 \le j \le J_i .
\]
For instance, in the example of hospitals, we may have data of $J_i$ patients or treatments in hospital $i$, and the tuples $X_{ij}$ may be case-specific rather than hospital-specific. In the example of service areas of a postal service, cases could be items to be delivered with the tuple $X_{ij}$ describing the type, size and weight of the item and characteristics of the receiver or the neighborhood he or she is living in. A potential problem, though, would be the determination of the single delivery times $Y_{ij}$.

Then a possible alternative to model equation \eqref{eq:additive.model} is given by \eqref{eq:additive.model.2}, i.e.
\[
	Y_{ij} \ = \ f_o(X_{ij}) + a_i + \epsilon_{ij}
\]
with a regression function $f_o : \XX \to \R$ as before, unit-specific parameters $a_i$ measuring their performances and random errors $\epsilon_{ij}$ with mean zero. Identifiability can be achieved by requiring that
\[
	\sum_{i=1}^n \sum_{j=1}^{J_i} f_o(X_{ij}) \ = \ 0 .
\]
Then the parameter $a_i$ would be the expected performance of unit $i$ if it was dealing with a randomly chosen case from all $J_+ := \sum_{i=1}^n J_i$ cases.

For instance, if any $x \in \XX$ is a tuple $(x^{(k)})_{k=1}^K$ of $K$ numerical or $\{0,1\}$-valued covariables, one could think about multiple linear regression:
\[
	f_o(x) \ := \ \sum_{k=1}^K \beta_k (x^{(k)} - \bar{X}^{(k)})
\]
with the overall averages
\[
	\bar{X}^{(k)} \ := \frac{1}{J_+} \sum_{i=1}^n \sum_{j=1}^{J_i} X_{ij}^{(k)}
\]
and certain real parameters $\beta_k$. Alternatively one could consider multiple quadratic regression:
\[
	f_o(x) \ := \ \sum_{k=1}^K \beta_k (x^{(k)} - \bar{X}^{(k)})
		+ \sum_{k=1}^K \sum_{\ell=k}^K \beta_{k,\ell}
			(x^{(k)} x^{(\ell)} - \bar{X}^{(k,\ell)})
\]
with the average $\bar{X}^{(k,\ell)}$ of all $J_+$ products $X_{ij}^{(k)} X_{ij}^{(\ell)}$ and certain real parameters $\beta_k, \beta_{k,\ell}$.

An important restriction is that we do not allow for interactions between units and the $K$ covariables. That means, the impact of the $K$ covariables is the same for all $n$ units. Without such an assumption we would encounter similar identifiability problems as in the simpler regression setting discussed before.

Now the $\eps_{ij}$ are really considered as random errors, and the performance parameters $a_i$ may be estimated via least squares estimators $\hat{a}_i$. In addition, one could determine standard errors for these estimators and single or simultaneous confidence bounds for the underlying parameters $a_i$. Depending on standard residual diagnostics, the latter could be based on a generalisation of Tukey's method for linear models with homoscedastic Gaussian errors. Alternatively, if homoscedasticity is not plausible, one could apply a suitable variant of the wild bootstrap (Mammen 1993); see the lecture notes of D\"umbgen (2015) for more details.

\section{Technical arguments}
\label{sec:Proofs}

\begin{proof}[\bf Proof of Lemma~\ref{Lemma:hat matrix}]
We write $\hat{\bs{\eps}} = \bs{G} \bs{\eps}$ with the companion hat matrix $\bs{G} := \bs{I} - \bs{H}$ describing the orthogonal projection on $\FF(\X)^\perp$. Since
\[
	\hat{\eps}_i - \hat{\eps}_j \ = \ \sum_{k=1}^n (G_{ik} - G_{jk}) \eps_k ,
\]
we may conclude that $\hat{\eps}_i - \hat{\eps}_j$ has a continuous distribution whenever $G_{ik} \ne G_{jk}$ for at least one index $k$, and this implies that $\Pr(\hat{\eps}_i = \hat{\eps}_j) = 0$. Otherwise,
\begin{equation}
\label{eq:Gi=Gj 1}
	G_{ik} \ = \ G_{jk} \quad\text{for} \ k=1,2,\ldots,n ,
\end{equation}
and $\Pr(\hat{\eps}_i = \hat{\eps}_j) = 1$. Since $\bs{G}^\top = \bs{G}$, condition~\eqref{eq:Gi=Gj 1} implies that
\begin{equation}
\label{eq:Gi=Gj 2}
	G_{ii} \ = \ G_{jj} \ = \ G_{ij} .
\end{equation}
On the other hand, since $\bs{G}^\top \bs{G} = \bs{G}$,
\[
	\sum_{k=1}^n (G_{ik} - G_{jk})^2 \ = \ G_{ii} + G_{jj} - 2 G_{ij} .
\]
Thus conditions~\eqref{eq:Gi=Gj 1} and \eqref{eq:Gi=Gj 2} are equivalent. Since $\bs{H} = \bs{I} - \bs{G}$, condition~\eqref{eq:Gi=Gj 2} is equivalent to
\begin{equation}
\label{eq:Gi=Gj 3}
	H_{ii} \ = \ H_{jj} \ = \ 1 + H_{ij} .
\end{equation}

Finally, denoting the columns of $\bs{H}$ with $\bs{h}_1, \bs{h}_2, \ldots, \bs{h}_n$, it follows from $\bs{H}^\top \bs{H} = \bs{H}$ and the Cauchy-Schwarz inequality that $|H_{ij}| = |\bs{h}_i^\top \bs{h}_j| \le \|\bs{h}_i\| \|\bs{h}_j\| = \sqrt{H_{ii} H_{jj}}$. Thus condition~\eqref{eq:Gi=Gj 3} implies that $H_{ii} \ge 1 - |H_{ij}| \ge 1 - \sqrt{H_{ii} H_{jj}} = 1 - H_{ii}$, whence $H_{ii} \ge 1/2$.
\end{proof}

A key ingredient for the proof of Theorem~\ref{Theorem:exact} is an elementary equality for bivariate Gaussian distributions which is well-known in the literature on robust correlation measures. For the reader's convenience we include a proof.

\begin{Lemma}
\label{Lemma:arcsin}
Let $\bs{Y}$ be a random vector with distribution $\mathcal{N}_2(\bs{0},\bs{\Sigma})$, where $\Sigma_{11}, \Sigma_{22} > 0$. Then
\[
	\Pr(Y_1 \le 0 \ \text{and} \ Y_2 \le 0)
	\ = \ \frac{\pi/2 + \arcsin(\rho)}{2\pi}
	\quad\text{with} \ \rho := \frac{\Sigma_{12}}{\sqrt{\Sigma_{11}\Sigma_{22}}} .
\]
\end{Lemma}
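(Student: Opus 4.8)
The plan is to reduce the statement to a fact about the correlation alone and then exploit the rotational invariance of the standard planar Gaussian. Since the event $\{Y_1 \le 0 \ \text{and} \ Y_2 \le 0\}$ is unchanged when $Y_1$ and $Y_2$ are multiplied by positive constants, I would first replace $\bs{Y}$ by $(Y_1/\sqrt{\Sigma_{11}}, Y_2/\sqrt{\Sigma_{22}})$. This does not alter the probability in question but normalizes both variances to one, so that the covariance becomes $\rho$ and the target becomes a function $g(\rho)$ of the single parameter $\rho \in [-1,1]$. The degenerate cases $\rho = \pm 1$, where $Y_2 = \pm Y_1$ almost surely, can be checked by hand (giving probabilities $1/2$ and $0$) and agree with the claimed formula, so I would assume $|\rho| < 1$ from here on.

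For the main step I would represent the normalized pair as projections of an isotropic Gaussian. Choose unit vectors $\bs{u}, \bs{v} \in \R^2$ with $\bs{u}^\top \bs{v} = \rho$, and let $\bs{Z} \sim \mathcal{N}_2(\bs{0}, \bs{I})$. Then $(\bs{u}^\top \bs{Z}, \bs{v}^\top \bs{Z})$ has exactly the normalized distribution, so $g(\rho) = \Pr(\bs{u}^\top\bs{Z} \le 0 \ \text{and} \ \bs{v}^\top\bs{Z} \le 0)$. Writing $\rho = \cos\theta$ with $\theta := \arccos\rho \in (0,\pi)$, the event describes the intersection of two half-planes through the origin, which is an infinite wedge. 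Because the law of $\bs{Z}$ is invariant under rotations of $\R^2$, the probability that $\bs{Z}$ lies in this wedge equals its opening angle divided by $2\pi$.

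It then remains to compute the opening angle. Each half-plane $\{\bs{w}^\top \bs{Z} \le 0\}$ is the semicircle of directions making an obtuse angle with $\bs{w}$, i.e.\ the semicircle centered at $-\bs{w}$; two such semicircles, whose centers are separated by the angle $\theta$, overlap in an arc of angular width $\pi - \theta$. Hence $g(\rho) = (\pi - \theta)/(2\pi)$, and substituting $\theta = \arccos\rho = \pi/2 - \arcsin\rho$ yields $g(\rho) = (\pi/2 + \arcsin\rho)/(2\pi)$, as asserted.

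The one place that needs care -- and the step I would double-check -- is this angular bookkeeping, since it is tempting to confuse the wedge of width $\pi - \theta$ with its complement of width $\theta$; evaluating at $\rho = 0$ (where $\theta = \pi/2$ and independence forces $g(0) = 1/4$) fixes the correct orientation. As an independent verification I would differentiate $g$ in $\rho$: using the identity $\partial_\rho \phi_2 = \partial_{y_1}\partial_{y_2}\phi_2$ for the bivariate density $\phi_2(\cdot,\cdot;\rho)$ together with the fundamental theorem of calculus, one gets $g'(\rho) = \phi_2(0,0;\rho) = (2\pi\sqrt{1-\rho^2})^{-1}$, and integrating from the boundary value $g(0) = 1/4$ reproduces the same closed form.
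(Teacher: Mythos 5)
Your proof is correct and follows essentially the same route as the paper: standardize to unit variances, then use rotational invariance of the planar standard Gaussian to identify the quadrant probability with the angular width of a wedge divided by $2\pi$. Your semicircle-overlap computation of the angle $\pi - \arccos(\rho)$ is just a coordinate-free version of the paper's polar-coordinate calculation (which parametrizes via $\alpha = \arcsin(\rho)$ and the sine addition formula), and your Plackett-identity check is a nice independent confirmation but not needed.
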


\begin{proof}[\bf Proof of Lemma~\ref{Lemma:arcsin}]
Since the probability in question does not change when we replace $Y_i$ with $\Sigma_{ii}^{-1/2} Y_i$, we may assume without loss of generality that $\Sigma_{11} = \Sigma_{22} = 1$ and $\Sigma_{12} = \rho$. If $\bs{Z}$ denotes a random vector with standard Gaussian distribution on $\R^2$, then $\bs{Y}$ has the same distribution as $[Z_1, \rho Z_1 + \bar{\rho} Z_2]^\top$, where $\bar{\rho} := \sqrt{1 - \rho^2}$. Now we write $\rho = \sin(\alpha)$ and $\bar{\rho} = \cos(\alpha)$ with $\alpha := \arcsin(\rho) \in [-\pi/2, \pi/2]$, and $\bs{Z} = [R\cos(\Theta), R\sin(\Theta)]^\top$, where $R := \|\bs{Z}\| > 0$ almost surely, and $\Theta$ is uniformly distributed on $[0,2\pi]$. Then
\begin{align*}
	\Pr(Y_1 \le 0 \ \text{and} \ Y_2 \le 0) \
	&= \ \Pr \bigl( \cos(\Theta) \le 0 \
		\text{and} \
		\sin(\alpha)\cos(\Theta) + \cos(\alpha)\sin(\Theta) \le 0 \bigr) \\
	&= \ \Pr \bigl( \cos(\Theta) \le 0 \
		\text{and} \ \sin(\alpha + \Theta) \le 0 \bigr) \\
	&= \ \Pr \bigl( \Theta \in [\pi/2, 3\pi/2] \
		\text{and} \ \alpha + \Theta \in [\pi, 2\pi] + 2\pi \mathbb{Z} \bigr) \\
	&= \ \Pr \bigl( \Theta \in [\pi - \alpha, 3\pi/2] \bigr) \\
	&= \ \frac{\pi/2 + \alpha}{2\pi} .
\end{align*}\\[-7ex]
\end{proof}

\begin{proof}[\bf Proof of Theorem~\ref{Theorem:exact}]
According to Lemma~\ref{Lemma:hat matrix},
\begin{align*}
	R_i \
	&= \ 1 + \sum_{k \ne i} 1_{[\eps_k \le \eps_i]}
		\ = \ 1 + \sum_{k \ne i} 1_{[\bs{a}_{ki}^\top \bs{\eps} \le 0]}
		\quad\text{and} \\
	\hat{R}_i \
	&= \ 1 + \sum_{k \ne i} 1_{[\hat{\eps}_k \le \hat{\eps}_i]}
		\ = \ 1 + \sum_{k \ne i} 1_{[\hat{\bs{a}}_{ki}^\top \bs{\eps} \le 0]}
\end{align*}
almost surely, where $\bs{a}_{ki} := \bs{e}_k - \bs{e}_i$ and $\hat{\bs{a}}_{ki} := \bs{G}(\bs{e}_k - \bs{e}_i)$ with the standard basis $\bs{e}_1, \bs{e}_2, \ldots, \bs{e}_n$ of $\R^n$. Consequently it follows from Lemma~\ref{Lemma:arcsin} that
\begin{align*}
	\Ex \bigl( & (\hat{R}_i - R_i)(\hat{R}_j - R_j) \bigr) \\
	&= \ \sum_{k \ne i, \ell \ne j} \Ex \,
		\bigl( 1_{[\bs{a}_{ki}^\top \bs{\eps} \le 0]}
			- 1_{[\hat{\bs{a}}_{ki}^\top \bs{\eps} \le 0]} \bigr)
		\bigl( 1_{[\bs{a}_{\ell j}^\top \bs{\eps} \le 0]}
			- 1_{[\hat{\bs{a}}_{\ell j}^\top \bs{\eps} \le 0]} \bigr) \\
	&= \ \sum_{k \ne i, \ell \ne j} \Bigl(
		\Pr \bigl( \bs{a}_{ki}^\top \bs{\eps} \le 0,
			\bs{a}_{\ell j}^\top \bs{\eps} \le 0 \bigr)
		+ \Pr \bigl( \hat{\bs{a}}_{ki}^\top \bs{\eps} \le 0,
			\hat{\bs{a}}_{\ell j}^\top \bs{\eps} \le 0 \bigr) \\
	& \qquad\qquad
		- \ \Pr \bigl( \bs{a}_{ki}^\top \bs{\eps} \le 0,
			\hat{\bs{a}}_{\ell j}^\top \bs{\eps} \le 0 \bigr)
		- \Pr \bigl( \hat{\bs{a}}_{ki}^\top \bs{\eps} \le 0,
			\bs{a}_{\ell j}^\top \bs{\eps} \le 0 \bigr) \Bigr) \\
	&= \ \frac{1}{2\pi} \sum_{k \ne i,\ell \ne j} \Bigl(
		\arcsin(\cos(\bs{a}_{ki}, \bs{a}_{\ell j}))
			+ \arcsin(\cos(\hat{\bs{a}}_{ki}, \hat{\bs{a}}_{\ell j})) \\
	& \qquad\qquad\qquad
		- \ \arcsin(\cos(\bs{a}_{ki}, \hat{\bs{a}}_{\ell j}))
			- \arcsin(\cos(\hat{\bs{a}}_{ki},\bs{a}_{\ell j})) \Bigr) ,
\end{align*}
where
\[
	\cos(\bs{v},\bs{w}) \ := \ \frac{\bs{v}^\top \bs{w}}{\|\bs{v}\| \|\bs{w}\|}
	\quad\text{for} \ \bs{v}, \bs{w} \in \R^n \setminus \{\bs{0}\} .
\]
Note that
\[
	\bs{a}_{ki}^\top \bs{a}_{\ell j}^{}
	\ = \ (\bs{e}_k - \bs{e}_i)^\top (\bs{e}_\ell - \bs{e}_j)
	\ = \ \delta_{k\ell} + \delta_{ij} - \delta_{kj} - \delta_{i\ell}
	\ =: \ \Delta_{k\ell,ij} ,
\]
and with
\[
	H_{k\ell,ij} \ := \ (\bs{e}_k - \bs{e}_i)^\top \bs{H} (\bs{e}_\ell - \bs{e}_j)
	\ = \ H_{k\ell} + H_{ij} - H_{kj} - H_{i\ell} ,
\]
we may write
\begin{align*}
	\bs{a}_{ki}^\top \hat{\bs{a}}_{\ell j}^{} \
	&= \ (\bs{e}_k - \bs{e}_i)^\top \bs{G} (\bs{e}_\ell - \bs{e}_j)
		\ = \ G_{k\ell} + G_{ij} - G_{kj} - G_{i\ell} \\
	&= \ \Delta_{k\ell,ij} - H_{k\ell,ij} , \\
	\hat{\bs{a}}_{ki}^\top \hat{\bs{a}}_{\ell j}^{} \
	&= \ (\bs{e}_k - \bs{e}_i)^\top \bs{G}^\top \bs{G} (\bs{e}_\ell - \bs{e}_j)
		\ = \ (\bs{e}_k - \bs{e}_i)^\top \bs{G} (\bs{e}_\ell - \bs{e}_j)\\
	&= \ \Delta_{k\ell,ij} - H_{k\ell,ij} .
\end{align*}
Hence we obtain the formula
\begin{align*}
	\Ex \bigl( & (\hat{R}_i - R_i)(\hat{R}_j - R_j) \bigr) \\
	&= \ \frac{1}{2\pi} \sum_{k \ne i, \ell \ne j}
		\biggl( \arcsin \Bigl( \frac{\Delta_{k\ell,ij}}{2} \Bigr)
		+ \arcsin \Bigl( \frac{\Delta_{k\ell,ij} - H_{k\ell,ij}}
				{\sqrt{(2 - H_{kk,ii})(2 - H_{\ell\ell,jj})}} \Bigr) \\
	& \qquad\qquad\qquad
			- \ \arcsin \Bigl( \frac{\Delta_{k\ell,ij} - H_{k\ell,ij}}
				{\sqrt{2 (2 - H_{\ell\ell,jj})}} \Bigr)
			- \arcsin \Bigl( \frac{\Delta_{k\ell,ij} - H_{k\ell,ij}}
				{\sqrt{2 (2 - H_{kk,ii})}} \Bigr) \biggr) .
\end{align*}
But the restriction to indices $k \ne i$ and $\ell \ne j$ is superfluous, because $\Delta_{k\ell,ij} = H_{k\ell,ij} = 0$ whenever $k = i$ or $\ell = j$. This yields the first asserted formula.

In the special case of $i = j$, note that $\Delta_{k\ell,ii} = 1 + \delta_{k\ell}$ if $i \not\in \{k,\ell\}$. If we replace $\Delta_{k\ell,ii}$ with $1 + \delta_{k\ell}$ in our formula for $\Ex \bigl( (\hat{R}_i - R_i)^2 \bigr)$, we end up with the expression
\begin{align*}
	\frac{1}{2\pi} \sum_{k,\ell = 1}^n
		\biggl( & \arcsin \Bigl( \frac{1 + \delta_{k\ell}}{2} \Bigr)
		+ \arcsin \Bigl( \frac{1 + \delta_{k\ell} - H_{k\ell,ii}}
				{\sqrt{(2 - H_{kk,ii})(2 - H_{\ell\ell,ii})}} \Bigr) \\
	& - \ \arcsin \Bigl( \frac{1 + \delta_{k\ell} - H_{k\ell,ii}}
				{\sqrt{2 (2 - H_{\ell\ell,ii})}} \Bigr)
			- \arcsin \Bigl( \frac{1 + \delta_{k\ell} - H_{k\ell,ii}}
				{\sqrt{2 (2 - H_{kk,ii})}} \Bigr) \biggr) .
\end{align*}
But for $k = i$ or $\ell = i$ the corresponding summands are equal to zero, because $k = i$ implies that $H_{k\ell,ii} = H_{kk,ii} = 0$, and $\ell = i$ implies that $H_{k\ell,ii} = H_{\ell\ell,ii} = 0$. Distinguishing the cases $k = \ell$ and $k \ne \ell$ yields
\begin{align*}
	\Ex \bigl( (\hat{R}_i - R_i)^2 \bigr)
	\ = \ \frac{1}{2\pi} \sum_{k = 1}^n
		\biggl(
		& \pi - 2 \arcsin \Bigl( \sqrt{1 - H_{kk,ii}/2} \Bigr) \biggr)
			\ + \\
	\frac{1}{\pi} \sum_{1 \le k < \ell \le n} \biggl(
		& \frac{\pi}{6} + \arcsin \Bigl( \frac{1 - H_{k\ell,ii}}
				{\sqrt{(2 - H_{kk,ii})(2 - H_{\ell\ell,ii})}} \Bigr) \\
		& - \ \arcsin \Bigl( \frac{1 - H_{k\ell,ii}}
			{\sqrt{2 (2 - H_{\ell\ell,ii})}} \Bigr)
		- \arcsin \Bigl( \frac{1 - H_{k\ell,ii}}
			{\sqrt{2 (2 - H_{kk,ii})}} \Bigr) \biggr) .
\end{align*}
Finally the assertion follows from the well-known fact that $\arcsin \bigl( \sqrt{1 - t} \bigr) = \arccos \bigl( \sqrt{t} \bigr)$ for $0 \le t \le 1$.
\end{proof}

\begin{proof}[\bf Proof of Theorem~\ref{Theorem:approximate}]
First recall that $|H_{k\ell}| \le \sqrt{H_{kk} H_{\ell\ell}} \le \eta$, whence $|H_{k\ell,ij}| \le 4\eta$. Furthermore, $\Delta_{k\ell,ij} = \delta_{ij}$ whenever $\{k,\ell\} \cap \{i,j\} = \emptyset$ and $k \ne \ell$, i.e.\ $\Delta_{k\ell,ij} \ne \delta_{ij}$ for at most $n+2$ index pairs $(k,\ell)$. Elementary calculus shows that
\[
	\bigl| \arcsin(x) - \arcsin(y) \bigr|
	\ \le \ C \sqrt{|x - y|}
\]
for some constant $C$, the optimal one being $\pi / \sqrt{2}$. Hence
\begin{align*}
	\biggl| & \arcsin \Bigl( \frac{d}{2} \Bigr)
		+ \arcsin \Bigl( \frac{d - H_{k\ell,ij}}
			{\sqrt{(2 - H_{kk,ii})(2 - H_{\ell\ell,jj})}} \Bigr) \\
	& - \ \arcsin \Bigl( \frac{d - H_{k\ell,ij}}
			{\sqrt{2 (2 - H_{kk,ii})}} \Bigr)
		- \arcsin \Bigl( \frac{d - H_{k\ell,ij}}
			{\sqrt{2 (2 - H_{\ell\ell,jj})}} \Bigr) \biggr|
		\ = \ O(\eta^{1/2})
\end{align*}
uniformly in $k,\ell,i,j \in \{1,2,\ldots,n\}$ and $d \in \{\Delta_{k\ell,ij}, \delta_{ij}\}$. Consequently,
\begin{align*}
	\Ex \bigl( & (\hat{R}_i - R_i)(\hat{R}_j - R_j) \bigr) \\
	&= \ \frac{1}{2\pi} \sum_{k,\ell=1}^n
		\biggl( \arcsin \Bigl( \frac{\delta_{ij}}{2} \Bigr)
		+ \arcsin \Bigl( \frac{\delta_{ij} - H_{k\ell,ij}}
			{\sqrt{(2 - H_{kk,ii})(2 - H_{\ell\ell,jj})}} \Bigr) \\
	& \qquad\qquad\qquad
		- \ \arcsin \Bigl( \frac{\delta_{ij} - H_{k\ell,ij}}
			{\sqrt{2 (2 - H_{kk,ii})}} \Bigr)
		- \arcsin \Bigl( \frac{\delta_{ij} - H_{k\ell,ij}}
			{\sqrt{2 (2 - H_{\ell\ell,jj})}} \Bigr) \biggr)
		+ O(n \eta^{1/2})
\end{align*}
uniformly in $i,j \in \{1,2,\ldots,n\}$. But for $d \in [0,1]$ and $x,y,z \in [-4\eta,4\eta]$,
\begin{align*}
	&\arcsin \Bigl( \frac{d}{2} \Bigr)
		+ \arcsin \Bigl( \frac{d - x}{\sqrt{(2 - y)(2 - z)}} \Bigr)
		- \arcsin \Bigl( \frac{d - x}{\sqrt{2(2 - y)}} \Bigr)
		- \arcsin \Bigl( \frac{d - x}{\sqrt{2(2 - z)}} \Bigr) \\
	&= \ \arcsin \Bigl( \frac{d}{2} \Bigr)
		- \arcsin \Bigl( \frac{d - x}{2} \Bigr) \\
	& \qquad + \
		\arcsin \Bigl( \frac{d - x}{2} \Bigr)
		- \arcsin \Bigl( \frac{d - x}{\sqrt{2(2 - y)}} \Bigr) \\
	& \qquad + \
		\arcsin \Bigl( \frac{d - x}{\sqrt{(2 - y)(2 - z)}} \Bigr)
		- \arcsin \Bigl( \frac{d - x}{\sqrt{2(2 - z)}} \Bigr) \\
	&= \
		\arcsin \Bigl( \frac{d}{2} \Bigr)
		- \arcsin \Bigl( \frac{d - x}{2} \Bigr) \\
	& \qquad + \
		\arcsin \Bigl( \frac{d - x}{2} \Bigr)
		- \arcsin \Bigl( \frac{d - x}{2}
			\Bigl( 1 + \frac{y}{4} + O(\eta^2) \Bigr) \Bigr) \\
	& \qquad + \
		\arcsin \Bigl( \frac{d - x}{\sqrt{2(2 - z)}}
			\Bigl( 1 + \frac{y}{4} + O(\eta^2) \Bigr) \Bigr)
		- \arcsin \Bigl( \frac{d - x}{\sqrt{2(2 - z)}} \Bigr) \\
	&= \
		\arcsin' \Bigl( \frac{d}{2} + O(\eta) \Bigr)
			\frac{x}{2} \\
	& \qquad + \
	 	\arcsin' \Bigl( \frac{d}{2} + O(\eta) \Bigr)
			\Bigl( \frac{-dy}{8} + O(\eta^2) \Bigr)
		+
		\arcsin' \Bigl( \frac{d}{2} + O(\eta) \Bigr)
			\Bigl( \frac{dy}{8} + O(\eta^2) \Bigr) \\
	&= \
		\frac{x}{\sqrt{4 - d^2}} + O(\eta^2) .
\end{align*}
Consequently,
\[
	\Ex \bigl( (\hat{R}_i - R_i)(\hat{R}_j - R_j) \bigr)
	\ = \ \frac{1}{2\pi \sqrt{4 - \delta_{ij}}}
		\sum_{k,\ell=1}^n H_{k\ell,ij} + O \bigl( n \eta^{1/2} + n^2 \eta^2 \bigr) .
\]
Finally it follows from $\bs{H} \bs{1} = \bs{H}^\top \bs{1} = \bs{1}$ that
\[
	\sum_{k,\ell=1}^n H_{k\ell,ij}
	\ = \ \sum_{k,\ell=1}^n (H_{k\ell} + H_{ij} - H_{kj} - H_{i\ell})
	\ = \ n^2 H_{ij} - n .
\]\\[-7ex]
\end{proof}

\paragraph{Acknowledgement.}
Constructive comments of Werner Stahel, Anja M\"uhlemann and Christof Str\"ahl are gratefully acknowledged.

\end{document}